\newtheorem{thm}{Theorem}
\newtheorem{lem}[thm]{Lemma}
\title{Symmetry Implies Isomorphism for Certain Maximum Length Circuit Codes}
\author{Kevin M. Byrnes
\thanks{E-mail:\texttt{dr.kevin.byrnes@gmail.com}}
}
\begin{document}
\maketitle

\begin{abstract}
A classic result due to Douglas \cite{Douglas2} establishes that, for odd spread $k$ and dimension $d=\frac{1}{2}(3k+3)$, all maximum length $(d,k)$ circuit codes are isomorphic.  Using a recent result of Byrnes \cite{Byrnes2020} we extend Douglas's theorem to prove that, for $k$ even $\ge 4$ and $d=\frac{1}{2}(3k+4)$, all maximum length symmetric $(d,k)$ circuit codes are isomorphic.
\end{abstract}

Let $I(d)$ denote the graph of the $d$-dimensional hypercube.
A cycle $C$ of $I(d)$ is a \emph{$d$-dimensional circuit code of spread $k$}, also called a $(d,k)$ circuit code, if it satisfies the distance requirement:
\begin{equation}
\label{eq_1}
d_{I(d)}(x,y)\ge \min \{d_C(x,y),k\} \ \forall x,y\in C
\end{equation}
\noindent where $d_{I(d)}(x,y)$ and $d_C(x,y)$ denote the minimum path length between vertices $x$ and $y$ in $I(d)$ and $C$, respectively.
Computing the maximum length of a $(d,k)$ circuit code, $K(d,k)$, for a given dimension $d$ and spread $k$ is an extremely challenging computational problem, and significant analysis is required to make the problem tractable even for small values of $d$ and $k$ \cite{Kochut, Hood, Ostergard}.
Exact formulas for $K(d,k)$ (for particular infinite families of $(d,k)$ pairs) are exceedingly rare.
In a groundbreaking paper, Douglas \cite{Douglas2} (building upon the previous work of Singleton \cite{Singleton}) established the following formulas for $K(d,k)$.

\begin{thm}[\cite{Douglas2} Theorem 4]
\label{thm_1}
Let $k$ be odd and let $d=\frac{1}{2}(3k+3)$, then $K(d,k)=4k+4$.  Furthermore there is a unique, up to isomorphism of $I(d)$, $(d,k)$ circuit code of length $K(d,k)$.
\end{thm}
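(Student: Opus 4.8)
\medskip
\noindent\emph{Proof strategy.} The plan is to argue entirely with the \emph{transition sequence} of the code. Writing $C$ as a closed walk $v_0,v_1,\dots,v_{n-1},v_n=v_0$ in $I(d)$ with $n=|C|$ and recording the coordinate $t_m\in\{1,\dots,d\}$ flipped between $v_{m-1}$ and $v_m$, we obtain a cyclic word $\tau=t_1t_2\cdots t_n$ in which every coordinate that occurs does so an even number of times; permuting the coordinate labels of $\tau$ realises an arbitrary isomorphism of $I(d)$, while cyclic rotations and reversals of $\tau$ correspond to re-choosing the basepoint and orientation of $C$. Since the values of $K(d',k)$ for $d'<\tfrac{1}{2}(3k+3)$ are all smaller than $4k+4$ (see \cite{Singleton}), a maximum-length code uses all $d$ coordinates, so each of $1,\dots,d$ occurs at least twice in $\tau$. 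First I would re-express the spread condition \eqref{eq_1} as: (C1) any $k$ consecutive letters of $\tau$ are pairwise distinct; and (C2) for every $\ell$ with $k\le\ell\le n-k$, the $\mathbb{F}_2$-sum of the unit vectors indexed by any $\ell$ consecutive letters of $\tau$ has weight at least $k$ (equivalently, at least $k$ coordinates occur an odd number of times in that block). Applying (C2) to the block delimited by two nearest occurrences of a fixed coordinate then strengthens (C1) to: consecutive occurrences of any coordinate lie at cyclic distance $\ge k+1$; in particular every coordinate occurs at most $n/(k+1)$ times.

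For the upper bound $K(d,k)\le4k+4$, put $d=\tfrac{1}{2}(3k+3)$, so $2d=3k+3$, and write $n=2d+E$ with $E=\sum_c(\mathrm{mult}_\tau(c)-2)\ge0$; the bound is equivalent to $E\le k+1$. I expect this to be the crux. The naive counting is useless here: because $d/(k+1)=\tfrac{3}{2}$, summing $\mathrm{mult}_\tau(c)\le n/(k+1)$ over all $c$ only recovers $n\le\tfrac{3}{2}n$. Instead I would run a finer accounting based on (C2) for \emph{medium} block lengths (near $n/2$): the coordinates of multiplicity $\ge4$ are few, because each of them forces long runs of distinct letters around its occurrences, and once there are more than $\tfrac{k+1}{2}$ of them some medium block is pushed below XOR-weight $k$; carried out carefully this should give exactly $E\le k+1$. (Alternatively one can aim for a recursive estimate of $K(d,k)$ via $K(d-1,k)$ — delete a least-frequent coordinate by a standard reduction and iterate down to Singleton's base values; I would pursue both routes.)

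For existence I would exhibit the extremal code. Split the $d$ coordinates into $\tfrac{k+1}{2}$ \emph{heavy} symbols $h_1,\dots,h_{(k+1)/2}$ and $k+1$ \emph{light} symbols $\ell_1,\dots,\ell_{k+1}$, and let $\tau$ be the length-$(2k+2)$ word
\[
B=h_1\ell_1\,h_2\ell_2\cdots h_{(k+1)/2}\ell_{(k+1)/2}\;\;h_1\ell_{(k+3)/2}\,h_2\ell_{(k+5)/2}\cdots h_{(k+1)/2}\ell_{k+1}
\]
written twice, so $n=4k+4$, each $h_i$ occurs four times and each $\ell_j$ twice. Then (C1) holds because any $k+1$ consecutive letters of $\tau$ fill $\tfrac{k+1}{2}$ odd positions — carrying exactly one full cycle $h_1,\dots,h_{(k+1)/2}$ — and $\tfrac{k+1}{2}$ even positions carrying $\tfrac{k+1}{2}<k+1$ consecutive, hence distinct, light symbols. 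Condition (C2) follows from a short XOR-weight computation: a block of length $\ell$ with $k\le\ell\le2k+2$ meets $m\in\{\lfloor\ell/2\rfloor,\lceil\ell/2\rceil\}$ heavy slots and $\ell-m$ light slots; all light slots are distinct and all but $\max(0,m-\tfrac{k+1}{2})$ heavy slots are distinct, so the weight is at least $(\ell-m)+(k+1-m)=k+1+\ell-2m\ge k$, and blocks of length $>n/2$ reduce to shorter ones by complementation. This code admits a nontrivial symmetry (the rotation of $C$ by $n/2$), consistently with the even-spread analogue needing a symmetry hypothesis.

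For uniqueness I would show that any length-$(4k+4)$ $(d,k)$ code equals $B^2$ up to relabelling, rotation and reflection, by forcing every inequality above to become an equality. Tightness first fixes the multiplicity profile to $k+1$ twos and $\tfrac{k+1}{2}$ fours (nothing larger, since multiplicity $2m$ needs $n\ge 2mk$). For a multiplicity-$4$ coordinate the four cyclic gaps sum to $4(k+1)$ and each is $\ge k+1$, hence all equal $k+1$: its occurrences form an arithmetic progression of common difference $k+1$, so it fills one full residue class modulo $k+1$, and the $\tfrac{k+1}{2}$ heavy classes are disjoint. Finally, (C2) with $\ell=2k+2$ forces, in every such block, each of the $\tfrac{k+1}{2}$ light residue classes to contribute its full weight $2$; this pins down the alternation of heavy and light classes in the prescribed period and the positions of the two copies of each light symbol, yielding $B^2$ after relabelling. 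The one delicate point is the bookkeeping for the light symbols once the heavy ones are known to be equally spaced, and I expect it to be routine.
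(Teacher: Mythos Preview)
The paper does not prove this theorem at all: Theorem~\ref{thm_1} is quoted from \cite{Douglas2} as background, so there is no ``paper's own proof'' to compare your proposal against. That said, your sketch has two genuine gaps worth flagging.

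\textbf{Upper bound.} You correctly identify $K(d,k)\le 4k+4$ as the crux, but you do not actually prove it. Both suggested routes---``a finer accounting based on (C2) for medium block lengths'' and ``a recursive estimate via $K(d-1,k)$''---are left as intentions (``carried out carefully this should give'', ``I would pursue both routes''). Neither is even outlined to the point where one can see why the bound comes out exactly $k+1$ rather than something weaker. Since this is the only nontrivial inequality in the statement, the proposal is not a proof until one of these routes is executed.

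\textbf{Uniqueness.} Your reduction to the multiplicity profile ($\tfrac{k+1}{2}$ fours and $k+1$ twos) and the observation that each multiplicity-$4$ coordinate fills a residue class modulo $k+1$ are correct. But the claim that ``(C2) with $\ell=2k+2$ \dots\ pins down the alternation of heavy and light classes'' is wrong: that single block length only forces, within each light class, that the two positions in any half-period carry different symbols; it says nothing about \emph{which} residue classes are heavy. For $k=3$ one can place the two heavy classes adjacently (say classes $0,1$ in $\mathbb{Z}/4$) and still satisfy all $\ell=2k+2$ constraints; what kills this configuration is (C2) at $\ell=k+3$, where a block starting at a heavy position then sees two consecutive heavy classes cancel and drops to $\delta=k-1$. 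So the alternation is indeed forced, but by shorter blocks than the ones you invoke, and the ``routine bookkeeping'' you defer is exactly where the argument currently breaks.
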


\begin{thm}[\cite{Douglas2} Theorem 3]
\label{thm_2}
Let $k$ be even and let $d=\frac{1}{2}(3k+4)$.  Then $K(d,k)=4k+6$.
\end{thm}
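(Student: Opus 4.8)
The plan is to prove the two inequalities $K(d,k)\ge 4k+6$ and $K(d,k)\le 4k+6$ separately, working throughout with the \emph{transition sequence} $\tau=(t_1,\dots,t_n)$ of a length-$n$ cycle $C$ of $I(d)$: here $t_i\in\{1,\dots,d\}$ is the coordinate flipped along the $i$th edge of $C$, and indices are read cyclically. Since $I(d)$ is bipartite, $n$ is even; since $C$ closes up, each coordinate occurring in $\tau$ occurs an even --- hence $\ge 2$ --- number of times. Writing out the two connecting arcs between any pair of vertices of $C$, one checks that \eqref{eq_1} is equivalent to the conjunction of (i) any $k$ consecutive entries of $\tau$ are pairwise distinct --- equivalently, consecutive occurrences of a fixed coordinate lie at cyclic distance $\ge k$ --- and (ii) for every sub-arc of $\tau$ of length $\ell$ with $k\le\ell\le n-k$, the number of coordinates of odd multiplicity in that arc, which is exactly the Hamming distance between its endpoints, is at least $k$. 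Applying (ii) at $\ell=k+1$ in fact sharpens the gap in (i) to $\ge k+1$ whenever $n\ge 2k+2$, a small bookkeeping gain I would use below.

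For the lower bound I would write down an explicit $\tau$ of length $4k+6$ on exactly $d=\tfrac{3k}{2}+2$ symbols and verify (i) and (ii) by a direct (finite) computation. The shape of such a code is essentially forced: each multiplicity is at most $n/(k+1)=4+2/(k+1)<5$, so every coordinate has multiplicity $2$ or $4$, and then $2s+4(d-s)=4k+6$ forces $s=k+1$ coordinates of multiplicity $2$ and $\tfrac{k}{2}+1$ of multiplicity $4$. The natural candidate is a Singleton-style code \cite{Singleton} assembled from a short generator word together with a reflected copy, spliced so that these multiplicities and the cyclic-gap condition are realized; checking (ii) is then tedious but routine.

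For the upper bound, suppose $C$ has length $n>4k+6$, so $n\ge 4k+8$ by parity, and assume (as we may) that every coordinate of $I(d)$ occurs in $\tau$ --- otherwise $C$ lies in a proper subcube and the bound is inherited, by induction on $d$ or from Singleton's computations \cite{Singleton}. Counting multiplicities as above, at least $2d-\tfrac{n}{2}$ coordinates have multiplicity exactly $2$ --- at least $k$ of them when $n=4k+8$. Fix such a coordinate $c$: its two occurrences split $\tau$ into arcs of lengths $\ell_c$ and $n-\ell_c$, both $\ge k$ by (i), so the endpoints of either arc are at cycle-distance $\ge k$; hence by (ii) the shorter of the two arcs, of length at most $\tfrac{n}{2}$, must contain at least $k$ coordinates of odd multiplicity, that is, it is nearly repetition-free. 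The target is then to show that $k$ (or even a constant fraction of $k$) such nearly-repetition-free arcs, each of length between $k$ and $\tfrac{n}{2}$ and respecting the cyclic-gap constraint, cannot simultaneously be carved out of a cyclic word of length $n\ge 4k+8$ that uses only $\tfrac{3k}{2}+2$ symbols; for the larger values of $n$, where there are fewer multiplicity-$2$ coordinates, one supplements this with a layer reduction.

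Establishing that impossibility, and disposing of the tight near-extremal configurations, is the step I expect to be the main obstacle. The tool I would use is the layer reduction: fixing a coordinate $a$ and cutting $C$ at the flips of $a$ produces sub-paths lying alternately in the hyperplanes $x_a=0$ and $x_a=1$, and deleting coordinate $a$ turns each family into a vertex-disjoint union of paths of $I(d-1)$ that still satisfies \eqref{eq_1} with spread $k$; thus the number of vertices in each layer is controlled by $K(d-1,k)$ and related quantities. Iterating this reduction while tracking how the numerous multiplicity-$2$ coordinates distribute among the layers, and finishing the handful of remaining tight cases by direct inspection, is where the proof becomes long and delicate --- this is the core of Douglas's original argument, and where I would concentrate the effort.
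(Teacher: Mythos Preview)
The paper does not contain a proof of this theorem at all: Theorem~\ref{thm_2} is simply quoted from Douglas~\cite{Douglas2} as a known result and used as a black box (together with Theorem~\ref{thm_6}) in the proof of Lemma~\ref{lem_4}. So there is no ``paper's own proof'' to compare against.

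As to your proposal itself, it is an honest outline rather than a proof. The lower bound is unproblematic --- an explicit transition sequence of length $4k+6$ on $\tfrac{3k}{2}+2$ symbols exists and can be verified directly. For the upper bound, you correctly set up the standard machinery (multiplicity counts, the $\delta$-inequalities \eqref{eq_2}--\eqref{eq_4}, layer reduction by cutting at a fixed coordinate), but you explicitly flag the core step --- showing that the many nearly-repetition-free arcs forced by the multiplicity-$2$ coordinates cannot coexist in a cyclic word of length $\ge 4k+8$ on only $\tfrac{3k}{2}+2$ symbols --- as ``the main obstacle'' and then defer to ``the core of Douglas's original argument.'' That is precisely where all the content lies; the counting you sketch beforehand is preparatory and does not by itself yield a contradiction. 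In particular, the layer-reduction step you describe bounds each layer by $K(d-1,k)$, but for the relevant parameters $K(d-1,k)$ is itself the quantity being inductively controlled, so without a careful case analysis of how the multiplicity-$2$ coordinates and the long bit runs interact (which is exactly what Douglas carries out over several pages), the induction does not close. Your write-up is a reasonable road map to Douglas's proof, but it is not yet a proof.
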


\begin{thm}[\cite{Douglas2} Theorem 5]
\label{thm_3}
Let $k$ be odd and $\ge 9$, and let $d=\frac{1}{2}(3k+5)$.  Then $K(d,k)=4k+8$.
\end{thm}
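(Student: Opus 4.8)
The plan is to prove the two inequalities $K(d,k)\ge 4k+8$ and $K(d,k)\le 4k+8$ for $d=\tfrac12(3k+5)$ separately, working throughout with transition sequences. Encode a cycle of $I(d)$ by the cyclic word $\tau=(\tau_1,\dots,\tau_L)$ recording which coordinate is flipped at each step; being a closed walk forces every coordinate to appear an even number of times, and a short computation from \eqref{eq_1} shows that, for the lengths relevant here, the spread-$k$ requirement is equivalent to: (i) every $k+1$ cyclically consecutive entries of $\tau$ are pairwise distinct; and (ii) for every $\ell$ with $k+2\le\ell\le\lfloor L/2\rfloor$, any $\ell$ consecutive entries of $\tau$ flip at least $k$ distinct coordinates an odd number of times. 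Thus $K(d,k)$ is the largest $L$ admitting such a word on $d$ letters.

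For the lower bound I would exhibit an explicit word of length $4k+8$. The multiplicity profile is forced: writing $n_i$ for the number of letters of multiplicity $i$, condition (i) gives $m(k+1)\le L$ for every multiplicity $m$, hence $m\le 4$ once $k\ge 2$; then $n_2+n_4=d=\tfrac12(3k+5)$ and $2n_2+4n_4=4k+8$ yield $n_2=k+1$ and $n_4=\tfrac{k+3}{2}$. The unique optimal $(\tfrac12(3k+3),k)$ code of Theorem~\ref{thm_1} has length $4k+4$ with exactly the same $n_2=k+1$ and $n_4=\tfrac{k+1}{2}$, i.e.\ one fewer multiplicity-$4$ letter and one fewer letter overall; this strongly suggests the construction of splicing a single new letter four times into that code's transition sequence, at cyclic positions at least $k+2$ apart. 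I would then check (i) and (ii) for the enlarged word: since the spacing exceeds $k$, every window meeting the new letter at most once reduces to a window of the old code (with at most one entry deleted), so only windows meeting it exactly twice need fresh attention, a finite verification uniform in $k$. (Alternatively one can try to bootstrap from the optimal $(\tfrac12(3k+1),k-1)$ code of Theorem~\ref{thm_2}, available since $k-1$ is even.)

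For the upper bound, suppose toward a contradiction that some $(d,k)$ circuit code has $d=\tfrac12(3k+5)$ and length $L=4k+10$ (longer even lengths are handled by the same method once letters of multiplicity $\ge 6$ are dispatched). As above every multiplicity is $\le 4$ (now using $k\ge 3$), so $L=2n_2+4n_4$ and $L-2d=2n_4\ge 4k+10-(3k+5)=k+5$, forcing $n_4\ge\tfrac{k+5}{2}$ letters of multiplicity $4$. For each such letter the four cyclic gaps between consecutive occurrences sum to $4(k+1)+6$ with each gap $\ge k+1$, so collectively they carry only $6$ units of slack above the minimum; this leaves only very short windows of $\tau$ into which the $k+1$ multiplicity-$2$ letters must fit without violating (i) or (ii). Carrying out the ensuing local analysis of the coordinate pattern around each vertex, in the style of Singleton~\cite{Singleton} and Douglas~\cite{Douglas2}, should produce a contradiction, proving $L\le 4k+8$ and simultaneously pinning down the extremal profile.

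The main obstacle is this upper bound, and specifically closing the final gap of a single pair of transitions between $4k+8$ and $4k+10$: the easy counting yields only ``multiplicity $\le 4$'' and ``$n_4\ge\tfrac{k+5}{2}$'', after which a contradiction must be wrung from the fine interleaving of multiplicity-$2$ and multiplicity-$4$ letters. This is exactly where the hypothesis $k\ge 9$ enters, since for small odd $k$ there are sporadic configurations and the clean formula can fail, so the argument must carry enough case analysis on the gap structure of the multiplicity-$4$ letters, and on how the multiplicity-$2$ letters sit among them, to eliminate every pattern once $k$ is large. I expect essentially all of the length and all of the risk of error to be concentrated there.
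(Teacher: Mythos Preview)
The paper does not contain a proof of this statement: Theorem~\ref{thm_3} is quoted from Douglas~\cite{Douglas2} (his Theorem~5) purely as background, alongside Theorems~\ref{thm_1} and~\ref{thm_2}, and no argument for it appears anywhere in the paper. So there is no ``paper's own proof'' to compare your proposal against.

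As to the proposal itself: what you have written is a strategy memo, not a proof. The arithmetic on multiplicities is correct (for $L=4k+8$ one is forced to $n_2=k+1$, $n_4=\tfrac{k+3}{2}$ when all $d$ letters are used; for a putative $L=4k+10$ one gets $n_4\ge\tfrac{k+5}{2}$), and the idea of inserting a fresh letter four times into the unique optimal code of Theorem~\ref{thm_1} is a reasonable heuristic for the lower bound. But both halves are left as promissory notes: for the construction you say only that the verification is ``finite \dots\ uniform in $k$'' without doing it, and for the upper bound you explicitly write that the local analysis ``should produce a contradiction'' and that ``essentially all of the length and all of the risk of error'' lives there. That is precisely the content of Douglas's argument, and it is genuinely delicate---the restriction $k\ge 9$ is not cosmetic, since $K(7,3)=24>4\cdot 3+8$ shows the formula fails for small odd $k$. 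Until you actually carry out the interleaving/case analysis that rules out $L=4k+10$ for $k\ge 9$, there is no proof here, only an outline of where one would look for it.
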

\noindent Note that isomorphism of all maximum length $(d,k)$ circuit codes is only established in the case where $k$ is odd and $d=\frac{1}{2}(3k+3)$, as in Theorem \ref{thm_1}.

While a circuit code, $C$, can be represented as a sequence of vertices $C=(x_1,x_2,\ldots,x_N)$ of $I(d)$, 
it is typically more convenient to express $C$ in another form.
Consider any bijection between the vertices of $I(d)$ and the set of binary vectors of length $d$ such that:
two vertices are adjacent in $I(d)$ if and only if their attendant vectors differ in a single position, and $x_1$ is mapped to $\vec{0}$.
Then we may equivalently describe a circuit code $C$ by its \emph{transition sequence} $T(C)=(\tau_1,\tau_2,\ldots,\tau_N)$
where $\tau_i$ denotes the single position in which the vectors attendant to $x_i$ and $x_{i+1}$ differ (with $x_{N+1}=x_1$ as $C$ is a cycle).
A \emph{transition} (or \emph{transition variable}) is a particular $\tau_i$, while the \emph{transition elements} are the unique values assumed by $\{\tau_1,\ldots,\tau_N\}$,
without loss of generality the set $[d]=\{1,2,\ldots,d\}$.
A transition sequence is \emph{symmetric} if $\tau_i = \tau_{N/2+i}$ for $i=1,\ldots,\frac{N}{2}$.  
We consider two $(d,k)$ circuit codes $C$ and $C'$ to be isomorphic if $T(C)$ can be transformed into $T(C')$ by a cyclic shift and a permutation of
$[d]$ (reflecting the fact that $C$ and $C'$ are identical up to the selection of the starting vertex and some symmetry of $I(d)$).
In this note we use the concept of a symmetric transition sequence to establish an isomorphism result analogous to Theorem \ref{thm_1} in the case where $k$ is even and $d=\frac{1}{2}(3k+4)$.

\begin{lem}
\label{lem_4}
Let $k$ be even and $\ge 4$ and let $d=\frac{1}{2}(3k+4)$.  Then the maximum length of a symmetric $(d,k)$ circuit code is $4k+6$.  Furthermore there is a unique, up to isomorphism of $I(d)$, symmetric $(d,k)$ circuit code of length $4k+6$.
\end{lem}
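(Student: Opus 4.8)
The plan follows the outline of Douglas's proof of Theorem \ref{thm_1}: reduce to analyzing one half of the transition sequence, extract the combinatorial skeleton that counting forces on a maximum-length code, feed this into the structural result of \cite{Byrnes2020}, and finish with a short case analysis for uniqueness together with an explicit construction for existence. By Theorem \ref{thm_2} we have $K(d,k)=4k+6$, so any symmetric $(d,k)$ circuit code has length at most $4k+6$ and only equality needs attention. Fix such a code $C$ and write $T(C)=(W,W)$ with $W=(\tau_1,\ldots,\tau_{2k+3})$. First I would record the elementary facts holding for every length-$(4k+6)$ $(d,k)$ circuit code: consecutive (cyclic) occurrences of a transition element lie at distance at least $k+1$ (otherwise a pair of vertices of $C$ within cycle-distance $\le k$ would be strictly closer in $I(d)$, contradicting \eqref{eq_1}); each transition element occurs an even number of times; and, since $\tfrac{4k+6}{k+1}<5$, each element therefore occurs exactly twice or exactly four times. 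Comparing the number of elements, $d=\tfrac12(3k+4)$, with the length $4k+6$ then forces exactly $\tfrac{k+2}{2}$ elements to occur four times and exactly $k+1$ to occur twice.

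Next I would rewrite \eqref{eq_1} for $C=(W,W)$ as conditions on $W$ alone. Let $p$ be the vertex reached after traversing the first copy of $W$; symmetry gives $x_{i+2k+3}=x_i\oplus p$, and $p$ is the $\oplus$-sum of exactly the ``single'' elements (those appearing once in $W$), so $d_{I(d)}(\vec{0},p)=k+1$. Index pairs lying in the same half force $W$, read as a path $\vec{0}=y_0,y_1,\ldots,y_{2k+3}=p$ in $I(d)$, to satisfy the path analogue of \eqref{eq_1}; the ``cross'' pairs, with one index in each half, become inequalities on $y_i\oplus y_j\oplus p$ that are tight precisely because $d_{I(d)}(\vec{0},p)=k+1$ exceeds the threshold $k$ by only one. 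A direct consequence: each fourfold element has its two occurrences in $W$ at distance $k+1$ or $k+2$, equivalently its four occurrences in $T(C)$ form the cyclic-gap pattern $(k+1,k+2,k+1,k+2)$.

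I would then invoke the structural result of \cite{Byrnes2020} to constrain how the $\tfrac{k+2}{2}$ fourfold elements and the $k+1$ twofold elements may be interleaved within $W$. Combined with the distance-$(k+1)$-or-$(k+2)$ property, the condition that no two consecutive transitions are equal, and the cross-conditions above, a short case analysis should reduce every admissible $W$ to a single word up to a cyclic shift and a relabeling of $[d]$, which is the uniqueness assertion. Existence then follows by exhibiting one explicit symmetric transition sequence of length $4k+6$ and verifying \eqref{eq_1} directly, the check mirroring the weight-and-parity bookkeeping already in place; the hypothesis $k\ge4$ enters in applying \cite{Byrnes2020} and in keeping the case analysis finite.

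The step I expect to be the main obstacle is the case analysis that pins down $W$: the cross-conditions couple the positions of the fourfold elements right across the sequence, so the two halves cannot be treated independently, and the various a priori cyclic arrangements of the fourfold-element occurrences must be eliminated without a brute-force enumeration — this is exactly where the result of \cite{Byrnes2020} is meant to carry the argument. A secondary difficulty is confirming that the explicit code used for existence really has length $4k+6$, i.e.\ that no far-apart pair of its vertices covertly violates \eqref{eq_1}.
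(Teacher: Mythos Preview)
Your counting observations (parity of occurrences, the split into $(k+2)/2$ fourfold and $k+1$ twofold elements, the forced gap pattern $(k+1,k+2,k+1,k+2)$ for fourfold elements) are correct, but your plan diverges from the paper at the crucial point and in doing so makes the problem much harder than it needs to be. The paper does \emph{not} attempt to pin down $W$ directly. Instead it uses Theorem~\ref{thm_6} in a sharper way than you do: part~(ii) of that theorem already asserts that there is a \emph{unique} symmetric code of length $S(d,k,k+3)=4k+6$ in $\mathcal{F}(d,k,k+3)$, so the entire uniqueness claim reduces to showing that any length-$(4k+6)$ symmetric $(d,k)$ code lies in $\mathcal{F}(d,k,k+3)$, i.e.\ that $T(C)$ contains a bit run of length $k+3$. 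Likewise part~(i) already gives existence, so no explicit construction or verification is needed.

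Thus the paper's actual work is a short, concrete case analysis: starting from the guaranteed $(k+2)$-bit run (Theorem~\ref{thm_5}), it writes $T(C)=(\omega_1,x,\omega_2,\omega_1,x,\omega_2)$ with $\omega_1=(1,\ldots,k+2)$ and $|\omega_2|=k$, and then uses \eqref{eq_2}--\eqref{eq_4} to force a $(k+3)$-bit run through a handful of cases on $x$ and on where $3$ sits in $\omega_2$. Your ``main obstacle''---the global case analysis on the interleaving of fourfold and twofold elements across $W$---is precisely what Theorem~\ref{thm_6} has already absorbed; by invoking \cite{Byrnes2020} only vaguely ``to constrain how the elements may be interleaved'' rather than as the black box that finishes the proof, you are proposing to redo that theorem's work. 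Your route is not wrong in principle, but the case analysis you defer is not short without the bit-run reduction, and both your existence step and your uniqueness step become unnecessary once Theorem~\ref{thm_6} is applied as intended.
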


To establish Lemma \ref{lem_4} we require the concept of a bit run.  Recall (from \cite{Byrnes2020}) that a \emph{segment} of $T(C)=(\tau_1,\ldots,\tau_N)$ is a cyclically consecutive subsequence $\omega=(\tau_i,\tau_{i+1},\ldots,\tau_j)$ (with subscripts $>N$ reduced modulo $N$).
We say that a segment $\omega$ is a \emph{bit run} if all of the transitions in $\omega$ are distinct (i.e. assume distinct values).
Singleton \cite{Singleton} showed that if a $(d,k)$ circuit code is sufficiently long, then it contains a bit run of length $k+2$.

\begin{thm}[\cite{Singleton} Theorem 1]
\label{thm_5}
Let $C$ be a $(d,k)$ circuit code with transition sequence $T(C)$ and having length $|C|>2(k+1)$.  
Then for any segment $\omega$ of $T(C)$ with length $\ge k+3$, either the first or last $k+2$ transitions of $\omega$ are a bit run.
\end{thm}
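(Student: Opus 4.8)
\emph{The engine of the whole argument is a spacing property:} in a code with $N:=|C|>2(k+1)$, any two transitions assuming the same value must lie at cyclic distance at least $k+1$. I would derive this straight from \eqref{eq_1}. If $\tau_a=\tau_b$ with $1\le b-a\le N/2$, then along the subpath $x_a,\ldots,x_{b+1}$ the coordinate $\tau_a$ is flipped at least twice, so $d_{I(d)}(x_a,x_{b+1})\le (b-a+1)-2$, whereas $d_C(x_a,x_{b+1})=b-a+1$; feeding these into \eqref{eq_1} forces $b-a+1\ge k+2$. Two consequences organize everything that follows: every block of $k+1$ consecutive transitions is automatically a bit run, and a block of exactly $k+2$ consecutive transitions fails to be a bit run \emph{if and only if} its first and last transitions coincide. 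Writing $\omega=(\tau_1,\ldots,\tau_m)$ with $m\ge k+3$, the second consequence reduces the theorem to a single assertion: we cannot simultaneously have $\tau_1=\tau_{k+2}$ and $\tau_{m-k-1}=\tau_m$.

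The clean case is $m=k+3$, where the two windows are adjacent. Suppose $\tau_1=\tau_{k+2}$ and $\tau_2=\tau_{k+3}$. The spacing property makes $\tau_1,\ldots,\tau_{k+1}$ pairwise distinct (in particular $\tau_1\ne\tau_2$), so along $x_1,\ldots,x_{k+4}$ exactly the two coordinates $\tau_1,\tau_2$ are flipped twice and the remaining $k-1$ coordinates once; hence $d_{I(d)}(x_1,x_{k+4})=k-1$. Since $N\ge 2k+4$ we have $d_C(x_1,x_{k+4})=\min\{k+3,N-k-3\}\ge k+1$, so \eqref{eq_1} demands $d_{I(d)}(x_1,x_{k+4})\ge k$ --- a contradiction. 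The same count shows, more generally, that no two \emph{adjacent} failing windows can occur.

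The main obstacle is the range $m\ge k+4$, where the two end-windows are non-adjacent and this Hamming count no longer bites: two coincident transitions at cyclic distance $t\ge 2$ cancel to leave $d_{I(d)}=k+t-2\ge k$, fully consistent with \eqref{eq_1}. Indeed, since two indices $i<i'$ with $\tau_i=\tau_{i+k+1}$ and $\tau_{i'}=\tau_{i'+k+1}$ would make both end-windows of the segment $(\tau_i,\ldots,\tau_{i'+k+1})$ fail, the general case is \emph{equivalent} to the global statement that a spread-$k$ cycle with $N>2(k+1)$ has at most one index $i$ with $\tau_i=\tau_{i+k+1}$. This is not a segment-local fact, and I expect establishing it to be the hard part of the proof.

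I would attack it by taking a shortest $\omega$ with both end-windows failing and arguing by induction on $m$. The inductive hypothesis, applied to the length-$(m-1)$ subsegments $(\tau_1,\ldots,\tau_{m-1})$ and $(\tau_2,\ldots,\tau_m)$, forces the windows $[m-k-1,m-1]$ and $[2,k+3]$ neighbouring the two failing windows to be bit runs, which pins down the coordinate sets of the two ``tight chords'' $x_1x_{k+3}$ and $x_{m-k-1}x_{m+1}$ (each of Hamming length exactly $k$). The decisive step is then to invoke \eqref{eq_1} \emph{around the cyclic complement} of $\omega$: bringing in the bordering transitions $\tau_0$ and $\tau_{m+1}$ and the vertices reached by traversing $C$ the long way, I would aim to exhibit two vertices at code-distance exceeding $k$ yet Hamming distance below $k$, contradicting \eqref{eq_1}. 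Controlling how the two tight chords interact through the remainder of the cycle --- the interaction that the purely local count cannot see --- is the crux of the argument.
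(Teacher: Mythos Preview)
The paper does not supply a proof of Theorem~\ref{thm_5}; the result is quoted from Singleton and used as a black box in the proof of Lemma~\ref{lem_4}. So there is no in-paper argument to compare your proposal against.

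Evaluating your proposal on its own merits: the case $|\omega|=k+3$ is handled correctly and completely. Your spacing lemma (equal transitions are at cyclic distance $\ge k+1$) is right, the parity observation that $N>2(k+1)$ forces $N\ge 2k+4$ is legitimate since the hypercube is bipartite, and the Hamming count $d_{I(d)}(x_1,x_{k+4})=k-1$ against $d_C\ge k+1$ gives the contradiction cleanly. This is also the only case the paper ever uses: the first invocation of Theorem~\ref{thm_5} merely needs the \emph{existence} of a $(k+2)$-bit-run, and the second is applied to the length-$(k+3)$ segment $(x,\omega_2,1,2)$.

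For $|\omega|\ge k+4$, however, what you have is a plan, not a proof. You correctly reformulate the general statement as the global claim that at most one cyclic index $i$ satisfies $\tau_i=\tau_{i+k+1}$, and you correctly diagnose why the local Hamming count no longer bites once the two failing windows separate. But the ``decisive step'' --- passing to the cyclic complement and exhibiting two vertices with code-distance exceeding $k$ yet Hamming distance below $k$ --- is stated as an intention only. You do not specify which pair of vertices to compare, and nothing in the setup forces the transitions along the long arc of $C$ to produce the needed cancellation; since that arc can be arbitrarily long relative to $k$, there is no a priori reason any particular pair works. The minimal-counterexample scaffolding (forcing the windows adjacent to the two failing ones to be bit runs) is sound but does not by itself supply the missing inequality. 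As written, the $m\ge k+4$ case has a genuine gap.
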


For any segment $\omega$ of $T(C)$, define $\delta(\omega)$ as the number of transition elements appearing an odd number of times in $\omega$.  Suppose that $N=|C|>2k$ and $\omega=(\tau_i,\tau_{i+1},\ldots,\tau_{j-1})$, then we have the following inequalities (see \cite{Byrnes2020} equations (2)-(4)):

\begin{equation}
\label{eq_2}
\delta(\omega,\tau_j)=\delta(\omega)\pm 1,
\end{equation}

\begin{equation}
\label{eq_3}
\delta(\omega)=|\omega|, \text{ if } |\omega|\le k+1,
\end{equation}

\begin{equation}
\label{eq_4}
\delta(\omega)\ge k, \text{ if } k\le |\omega| \le N-k.
\end{equation}

\noindent Let $\mathcal{F}(d,k,k+l)$ denote the family of all $(d,k)$ circuit codes $C$ such that $T(C)$ contains a bit run of length $\ge k+l$,
and let $S(d,k,k+l)$ denote the maximum length of a symmetric circuit code in $\mathcal{F}(d,k,k+l)$.
We derive Lemma \ref{lem_4} as a corollary to a recent result of \cite{Byrnes2020}:

\begin{thm}
\label{thm_6}
Let $k$ and $l$ be integers $\ge 2$ of opposite parity with $k\ge 2l+1$ if $k$ is odd, and $k\ge 2l-2$ if $k$ is even, and let $d=\frac{1}{2}(3k+l+1)$.  Then:
(i) $S(d,k,k+l)=4k+2l$, and
(ii) for $l=2 \text{ or } 3$ there is a unique (up to isomorphism) symmetric circuit code in $\mathcal{F}(d,k,k+l)$ of length $S(d,k,k+l)$.
\end{thm}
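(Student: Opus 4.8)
The plan is to prove part (i) by matching a counting upper bound with an explicit construction, and then to deduce part (ii) from the rigidity that the extremal analysis already forces. Write $N=|C|=2M$ (symmetry makes $N$ even), set $\omega_m=(\tau_1,\dots,\tau_m)$, and identify each vertex with its length-$d$ binary vector. Because $\tau_{M+i}=\tau_i$, the half-way vector $v:=x_1\oplus x_{M+1}$ satisfies $x_{M+1+t}=x_{1+t}\oplus v$, so the second half of $C$ is the translate of the first half by $v$; in particular the whole code is determined by the first half $P=(\tau_1,\dots,\tau_M)$. The quantities I would track are $w:=\delta(\omega_M)=|v|$ and, for each transition element $e$, its multiplicity $n_e$ in $P$ (so that $e$ occurs $2n_e$ times in $T(C)$ and $\sum_e n_e=M$).

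For the upper bound I would first apply \eqref{eq_4} to $\omega_M$ --- legal once $N\ge 2k$, since then $k\le M\le N-k$ (otherwise $N<2k<4k+2l$ and there is nothing to prove) --- to obtain $w\ge k$, and note from \eqref{eq_2} that $w\equiv M\pmod 2$. Spread then limits multiplicities: by \eqref{eq_3} two equal transitions lie at cyclic distance $\ge k+1$, so an element with $n_e\ge 3$ would occur $\ge 6$ times and force $N\ge 6(k+1)$, i.e. $M\ge 3k+3$; such long codes are ruled out in dimension $d=\tfrac12(3k+l+1)$ by the routine length bounds flowing from \eqref{eq_4} and Theorem \ref{thm_5}, so I may assume $n_e\le 2$. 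Writing $f$ for the number of elements occurring exactly twice in $P$ and $D$ for the number of distinct elements, this gives $M=w+2f$ and $D=w+f\le d$, whence $M=2D-w\le 2d-w\le 2d-k=2k+l+1$, with equality only when $D=d$ and $w=k$. Thus $M\le 2k+l+1$, and the boundary $M=2k+l+1$ can occur only in the rigid configuration in which all $d$ elements are used, exactly $k$ of them once and $\tfrac12(k+l+1)$ of them twice in $P$ (the opposite parity of $k,l$ makes $\tfrac12(k+l+1)$ an integer).

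To finish (i) I must exclude this boundary configuration, lowering $M$ to $2k+l$. Since $w=k$ sits at the minimum permitted by \eqref{eq_4}, the walk $\delta(\omega_m)$ cannot descend below $k$ on either side of the midpoint, which forces the step at $m=M{+}1$ (and symmetrically at $m=M$) to be an ascent; reading off \eqref{eq_2} this makes $\tau_1$ and $\tau_M$ twice-occurring elements. For any twice-occurring element with first-half occurrences at positions $j<q$, the four cyclic occurrences $j,q,j+M,q+M$ must be pairwise $\ge k+1$ apart, which confines $q$ to the length-$l$ window $[\,j+k+1,\ j+k+l\,]$. Because $P$ here splits into two bit runs --- the initial $(k+l)$-run and, by \eqref{eq_3}, the trailing block of length $k+1$ --- each twice-occurring element lands exactly once in each block, and the windowed placements become a system of distinct representatives that I expect to have no admissible solution in the boundary case. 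This infeasibility is the step I anticipate as the main obstacle: it is exactly where the coarse counting is off by one and the fine positional geometry of the spread condition must be invoked. For the matching lower bound I would exhibit an explicit symmetric code of length $4k+2l$ in $\mathcal F(d,k,k+l)$, taking $P$ to begin with a bit run on $k+l$ fresh elements and then to reuse $\tfrac12(k+l-1)$ of them at cyclic distances in $[k+1,k+l]$ while introducing the remaining $\tfrac12(k-l+1)$ elements, and verify \eqref{eq_1} by checking \eqref{eq_4} on the finitely many segment lengths near $k$ (for $l\le 4$ a symmetric maximum-length example is already available from Douglas, Theorems \ref{thm_1}--\ref{thm_3}).

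For (ii), with $l\in\{2,3\}$, I would upgrade the extremal analysis to genuine rigidity. At length $4k+2l$ the same bookkeeping forces $w=k+1$, $D=d$, all multiplicities in $\{1,2\}$, and $P$ again split into two bit runs; but now the windows $[\,j+k+1,\ j+k+l\,]$ governing the second occurrences have length only $2$ or $3$, so the distinct-representative system admits essentially one placement. Absorbing the labelling freedom by a permutation of $[d]$ and the starting vertex by a cyclic shift then pins down $T(C)$, giving uniqueness up to isomorphism of $I(d)$. The delicate point here is to confirm that these short windows leave no inequivalent solution; the opposite-parity and size hypotheses on $(k,l)$ are precisely what make the relevant block sizes and window counts line up, and for $l=2$ this uniqueness is also consistent with Douglas's Theorem \ref{thm_1}.
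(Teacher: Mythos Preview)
The paper does not contain a proof of Theorem~\ref{thm_6}: it is quoted as ``a recent result of \cite{Byrnes2020}'' and then used as a black box to derive Lemma~\ref{lem_4}. There is therefore no proof in this paper against which your proposal can be compared.

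As a standalone argument, your proposal is a reasonable strategic outline but not a proof. You yourself flag the two load-bearing steps as unresolved: in part~(i) the exclusion of the boundary case $M=2k+l+1$ is described as ``the step I anticipate as the main obstacle'' and left at the level of ``a system of distinct representatives that I expect to have no admissible solution''; and in part~(ii) the uniqueness reduction is stated as ``the delicate point here is to confirm that these short windows leave no inequivalent solution''. Neither of these is carried out. The construction for the lower bound is likewise only sketched (``I would exhibit \ldots\ and verify \eqref{eq_1} by checking \eqref{eq_4}''), and for general $l$ the appeal to Theorems~\ref{thm_1}--\ref{thm_3} does not cover the claimed range. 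Also note a minor slip: for $l=2$ the relevant Douglas result is Theorem~\ref{thm_2} (even $k$), not Theorem~\ref{thm_1}. If you want a genuine proof you will need to consult \cite{Byrnes2020} and fill in precisely the positional/SDR analysis you have identified as the crux.
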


\begin{proof}[\textbf{Proof of Lemma \ref{lem_4}}]
From Theorems \ref{thm_2} and \ref{thm_6} it immediately follows that the maximum length of a symmetric $(d,k)$ circuit code 
(for $k$ even $\ge 4$ and $d=\frac{1}{2}(3k+4)$) is $4k+6$.  
Furthermore, by Theorem \ref{thm_6} part (ii), to prove isomorphism of all maximum length symmetric $(d,k)$ circuit codes it suffices to show that for any such circuit code $C$, $T(C)$ contains a bit run of length $k+3$ (implying all such circuit codes are in $\mathcal{F}(d,k,k+3)$).
By Theorem \ref{thm_5}, $T(C)$ must contain a bit run of length $k+2$, so we may assume that $T(C)$ has the form:
\begin{equation}
\label{eq_5}
T(C)=(\underbrace{1,2,\ldots,k+2}_{\omega_1},x,\underbrace{\beta_1,\ldots,\beta_k}_{\omega_2},\omega_1,x,\omega_2).
\end{equation}

We begin with some preliminary observations.
By construction, $\omega_1$ is a bit run, and since $|(x,\omega_2)|=k+1$, the segment $(x,\omega_2)$ is also a bit run by (\ref{eq_3}).
Thus every transition element in $[d]$ appears at most twice in $(\omega_1,x,\omega_2)$ (at most once in each non-overlapping bit run), 
and since $|T(C)|=K(d,k)$ all $d$ transition elements appear at least once in $(\omega_1,x,\omega_2)$ (for if the transition element $t\in [d]$ were not present in $(\omega_1,x,\omega_2)$, then $T'=(\omega_1,x,\omega_2,t,\omega_1,x,\omega_2,t)$ would be a symmetric $(d,k)$ circuit code of length $4k+8$).
Define $\psi_i=(\beta_i,\ldots,\beta_k,1,\ldots,i)$ for $i=1,\ldots,k$, and define $\rho_j=(j+3,\ldots,k+2,x,\beta_1,\ldots,\beta_j)$ for $j=1,\ldots,k-1$.
Since $|\psi_i|=|\rho_j|=k+1 \ \forall i\in [k] \text{ and } \forall j \in [k-1]$, both $\psi_i$ and $\rho_j$ are bit runs, implying:
\begin{equation}
\label{eq_6}
\beta_i > i \text{ for } i=1,\ldots,k \text{ and } \beta_j \not\in \{j+3,\ldots,k+2\} \text{ for } j=1,\ldots,k-1.
\end{equation}

Now, $(3,\ldots,k+2,x)$ is a segment of length $k+1$ and so is a bit run.
If $x\not\in \{1,2\}$ then the segment $(\omega_1,x)$ of $T(C)$ is a bit run of length $k+3$, completing the proof.
Thus we will assume that $x\in \{1,2\}$.
First we establish that $(\omega_2,1,2)$ is a bit run.
Clearly $x\not\in \omega_2$ as $(x,\omega_2)$ is  bit run.
Consider the segment of $T(C)$, $\omega=(x,\omega_2,1,2)$, which has length $k+3$.
Since $|T(C)|=4k+6>2(k+1)$, by Theorem \ref{thm_5} either the first or last $k+2$ transitions of $\omega$ constitute a bit run.
If $x=1$ this implies that $(\omega_2,1,2)$ must be a bit run, 
while if $x=2$ then (since $(\omega_2,1)$ is a bit run by (\ref{eq_3}))
both $(x,\omega_2,1)$ and $(\omega_2,1,2)$ are bit runs.

If $3\not\in\omega_2$ then $(\omega_2,1,2,3)$ is a bit run of length $k+3$ in $T(C)$ and we are done, so we will suppose that $3\in \omega_2$.
From (\ref{eq_6}) it follows that $3\in \{\beta_1,\beta_2\}$.
Define $\omega$ as: $\omega=(x,\omega_2,1,2,3)$, then $|\omega|=k+4$.
Since $(\omega_2,1,2)$ is a bit run, both $x \text{ and } 3\in (\omega_2,1,2)$ by assumption, and $x\neq 3$, we have:
$\delta(\omega)=|\omega|-2\cdot|(\omega_2,1,2)\cap \{x,3\}|=k+4-2\cdot|\{x,3\}|=k$.
This implies $4\not\in \omega_2$, otherwise the segment $\omega'=(\omega,4)$ would have $\delta(\omega')=k-1$ (following from (\ref{eq_2})), which violates (\ref{eq_4}).
Now we split into cases depending upon whether $\beta_1=3$ or $\beta_2=3$ (note that since $\omega_2$ is a bit run exactly one of these alternatives holds).\\

\noindent\textbf{Case 1:} $\beta_1=3.$\\
Observe that $1,2,4\not\in \omega_2$ and $3\not\in \{\beta_2,\ldots,\beta_k\}$.  
Thus $(\beta_2,\ldots,\beta_k,1,2,3,4)$ is a bit run in $T(C)$ of length $k+3$.\\

\noindent \textbf{Case 2:} $\beta_2=3$.\\
Define $\omega=(4,\ldots,k+2,x,\beta_1,3,\beta_3)$, then $|\omega|=k+3$.
Since $x\in \{1,2\}$ and $x\not\in \omega_2$, $\omega$ fails to be a bit run only if $\beta_1$ or $\beta_3\in [4,k+2]=\{4,5,\ldots,k+2\}$.
By (\ref{eq_6}): $\beta_1\not\in [4,k+2]$, and if $\beta_3\in [4,k+2]$ then $\beta_3<6$.
As we have established that $1,2,4 \not\in \omega_2$ and $\beta_3\neq \beta_2=3$, the second implication can be sharpened to: $\beta_3=5$.
Therefore: $1,2,4 \not\in \omega_2$ and both $3$ and $5\not\in \{\beta_4,\ldots,\beta_k\}$.
In this case we claim that $\omega'=(\beta_4,\ldots,\beta_k,1,2,3,4,5,6)$ is a bit run of length $k+3$ in $T(C)$.
If not, then we must have $6\in \{\beta_4,\ldots,\beta_k\}$, but the segment $\omega''=(x,\omega_2,1,2,3,4,5,6)$ has length $k+7$ and the 
only repeated transition elements are: 1 xor 2, 3, 5, and 6 (each occurring twice in $\omega''$).
This means $\delta(\omega'')=|\omega''|-2\cdot|(x,\omega_2)\cap (1,\ldots,6)|=k+7-8=k-1$, violating (\ref{eq_4}).
Thus $6\not\in \{\beta_4,\ldots,\beta_k\}$ and so $\omega'$ is a bit run in $T(C)$ of length $k+3$.\\

In all cases, we have shown that $T(C)$ contains a bit run of length $k+3$, completing the proof.
\end{proof}

Before concluding, we note that the main technical result in the proof of Lemma \ref{lem_4} (the existence of a bit run of length $k+3$) also follows (after modification) from Case II of the proof of \cite{Douglas2} Theorem 3 (what we have labelled as Theorem \ref{thm_1}).
Specifically, there it is shown that for $k$ even and $d=\frac{1}{2}(3k+4)$ if a $(d,k)$ circuit code $C$ with $|C|\ge 4k+8$ exists, then $T(C)$ must contain a bit run of length $k+3$.
However, it appears possible to modify the proof to use the weaker condition $|C|\ge 4k+6$.
Since the main purpose of this note is to observe how all symmetric $(d,k)$ circuit codes (for $d$ and $k$ as in Lemma \ref{lem_4}) of length $4k+6$ are isomorphic as a consequence of Theorem \ref{thm_6}, rather than establishing the existence of a $k+3$ bit run in any (potentially asymmetric) such $(d,k)$ circuit code of length $\ge 4k+6$, we have presented an alternate, more accessible, proof.

\footnotesize
\bibliographystyle{plain}
\bibliography{SymmetryImpliesIsoForCC}
\normalsize

\end{document}